\def\set#1{\left\{ #1 \right\}}
\def\Z{\mathbb{Z}}
\def\Q{\mathbb{Q}}
\def\F{\mathbb{F}}
\def\P{\mathbb{P}}
\def\Gal{\operatorname{Gal}}
\def\GL{\operatorname{GL}}
\def\det{\operatorname{det}}
\def\im{\operatorname{im}}
\def\Aut{\operatorname{Aut}}
\theoremstyle{plain}
\newtheorem{theorem}{Theorem}
\newtheorem{lemma}[theorem]{Lemma}
\theoremstyle{definition}
\newtheorem{example}[theorem]{Example}
\theoremstyle{remark}
\title{A uniform bound on the smallest surjective prime of an elliptic curve}
\date{\today}
\subjclass[2010]{Primary 11G05; Secondary 11F80.}
\author{Tyler Genao}
\address{Tyler Genao, Department of Mathematics, The Ohio State University, Columbus, OH 43210}
\email{genao.5@osu.edu}
\author{Jacob Mayle}
\address{Jacob Mayle, Department of Mathematics, Wake Forest University, Winston-Salem, NC 27109}
\email{maylej@wfu.edu}
\author{Jeremy Rouse}
\address{Jeremy Rouse, Department of Mathematics, Wake Forest University, Winston-Salem, NC 27109}
\email{rouseja@wfu.edu}
\begin{document}

\begin{abstract} Let $E/\mathbb{Q}$ be an elliptic curve without complex multiplication. A well-known theorem of Serre asserts that the $\ell$-adic Galois representation $\rho_{E,\ell^\infty}$ is surjective for all but finitely many prime numbers $\ell$. Considerable work has gone into bounding the largest possible nonsurjective prime; a uniform bound of $37$ has been proposed but is yet unproven. We consider an opposing direction, proving that the smallest prime $\ell$ such that $\rho_{E,\ell^\infty}$ is surjective is at most $7$. Moreover, we completely classify all elliptic curves $E/\mathbb{Q}$ for which the smallest surjective prime is exactly $7$.
\end{abstract}

\maketitle

\section{Introduction}

Let $E$ be an elliptic curve defined over $\Q$. For a positive integer $N$ and a prime number $\ell$, we consider the mod $N$ and $\ell$-adic Galois representations of $E$, which are the homomorphisms
\begin{align*}
\rho_{E,N} &\colon \Gal(\overline{\Q}/\Q) \longrightarrow \Aut(E[N]) \overset{\sim}{\longrightarrow} \GL_2(\Z/N\Z) \\
\rho_{E,\ell^\infty} &\colon \Gal(\overline{\Q}/\Q) \longrightarrow \Aut(T_\ell(E)) \overset{\sim}{\longrightarrow} \GL_2(\Z_\ell)
\end{align*}
that describe the action of the absolute Galois group  $\Gal(\overline{\Q}/\Q)$ on the $N$-torsion subgroup $E[N]$ and the $\ell$-adic Tate module $T_\ell(E) := \varprojlim E[\ell^k]$ of $E$, respectively. 

In the case where $E$ has complex multiplication (henceforth CM), the additional geometric endomorphisms of $E$ impose constraints on the Galois image, preventing $\rho_{E,\ell^\infty}$ from being surjective for all primes $\ell$. However, when $E$ has no complex multiplication (henceforth non-CM), a foundational result of Serre, known as ``Serre's open image theorem,'' gives the following.

\begin{theorem}[Serre \cite{MR387283}, 1972]
    If $E/\mathbb{Q}$ is a non-CM elliptic curve, then $\rho_{E,\ell^\infty}$ is surjective for all sufficiently large prime numbers $\ell$.
\end{theorem}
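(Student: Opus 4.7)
The plan is to first show that the mod-$\ell$ representation $\rho_{E,\ell}$ is surjective for all sufficiently large primes $\ell$, and then upgrade this to surjectivity of $\rho_{E,\ell^\infty}$. For the upgrade, I would invoke the standard group-theoretic fact that for $\ell \geq 5$, any closed subgroup $H \subseteq \GL_2(\Z_\ell)$ whose reduction modulo $\ell$ equals $\GL_2(\F_\ell)$ must itself equal $\GL_2(\Z_\ell)$. The essential input is that $\SL_2(\F_\ell)$ is perfect for $\ell \geq 5$, which enables a successive lifting argument through the principal congruence filtration; coupled with the Weil pairing identification $\det \circ \rho_{E,\ell^\infty} = \chi_{\ell}$ (the $\ell$-adic cyclotomic character, which surjects onto $\Z_\ell^\times$), this reduces the theorem to the mod-$\ell$ statement.

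For mod-$\ell$ surjectivity, I would invoke Dickson's classification of subgroups of $\GL_2(\F_\ell)$: up to conjugacy, the image $G_\ell := \im \rho_{E,\ell}$ equals $\GL_2(\F_\ell)$ or is contained in a maximal proper subgroup of one of the following four types: (a) a Borel subgroup, (b) the normalizer of a split Cartan, (c) the normalizer of a non-split Cartan, or (d) an exceptional subgroup whose image in $\PGL_2(\F_\ell)$ is isomorphic to $A_4$, $S_4$, or $A_5$. It then suffices to rule out each of (a)--(d) for all sufficiently large $\ell$.

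Case (a) corresponds to $E$ admitting a $\Q$-rational cyclic $\ell$-isogeny; by Mazur's theorem on rational isogenies, this restricts $\ell$ to a finite explicit list (bounded by $163$). Case (d) is handled by an order estimate: the projective image has size at most $60$, so combined with the surjectivity of the mod-$\ell$ determinant on $\F_\ell^\times$, one checks that $G_\ell$ is too small to be a valid Galois image for large $\ell$. Cases (b) and (c) require a closer analysis of Frobenius traces modulo $\ell$: the characteristic polynomial of $\rho_{E,\ell}(\Frob_p)$ is $T^2 - a_p T + p$, and if $G_\ell$ lies in a Cartan normalizer, then for each prime $p$ of good reduction either $\rho_{E,\ell}(\Frob_p)$ lies in the Cartan (imposing a square condition on $a_p^2 - 4p$ modulo $\ell$) or in the non-trivial coset (forcing $a_p \equiv 0 \pmod \ell$). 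Paired with Chebotarev density and a quadratic twist argument, these congruences should force $E$ to have CM unless $\ell$ is bounded in terms of $E$.

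The main obstacle I foresee is case (c), the non-split Cartan normalizer: unlike the split case, one cannot directly invoke rational isogenies, and Serre's original treatment required deep input on $\ell$-adic Galois representations (inertia at $\ell$, Grothendieck's semistability criterion, and integrality results for compact subgroups of $\GL_2(\Q_\ell)$). Subsequent approaches translate the problem into rational point counts on the modular curves $X_{\mathrm{ns}}^+(\ell)$, handled via Mazur's method and Runge-type arguments (Bilu--Parent--Rebolledo), but any of these would be the most technically demanding step. This is the part of the argument on which I would expect to spend the bulk of the effort.
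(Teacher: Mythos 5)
The paper does not prove this theorem; it is stated with attribution and cited directly from Serre's 1972 Inventiones paper as background for the main result. Your sketch correctly reproduces the skeleton of the standard argument: the reduction from $\ell$-adic to mod-$\ell$ surjectivity via the perfectness of $\SL_2(\F_\ell)$ for $\ell \geq 5$ together with surjectivity of the determinant (Weil pairing), Dickson's classification of maximal subgroups of $\GL_2(\F_\ell)$, and the resulting four-way case analysis. Two caveats. Mazur's 1978 isogeny theorem and the Bilu--Parent--Rebolledo determination of $X_{\mathrm{ns}}^+(\ell)(\Q)$ are aimed at the \emph{uniform} version of the question; for the non-uniform statement asserted here (bound allowed to depend on $E$), Serre's original 1972 argument handles the Borel and Cartan-normalizer cases without them, so your plan is in that sense anachronistic and heavier than necessary. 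More substantively, your case (d) as stated --- ``$G_\ell$ is too small to be a valid Galois image'' --- is not an argument: a small image is not a priori impossible. The standard route instead uses the structure of the image of inertia at $\ell$ (a cyclic subgroup of order comparable to $\ell - 1$ coming from tame fundamental characters), whose image in $\PGL_2(\F_\ell)$ has order growing with $\ell$ and hence cannot sit inside $A_4$, $S_4$, or $A_5$ for large $\ell$.

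Finally, you correctly identify that the Cartan-normalizer cases (b) and (c) are where the real work lies, but you only flag them rather than argue them. Serre's handling of these cases draws on the local analysis at $\ell$ just mentioned, together with archimedean estimates on $a_p$ and a delicate Chebotarev argument; the nonsplit case is genuinely the deepest part. As written, your proposal is a reasonable roadmap of Serre's theorem, not a proof, and since the paper itself simply cites this result there is no in-paper proof against which to compare.
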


Serre also asked whether there is a {uniform} bound, \emph{independent of $E$,} on the largest prime number $\ell$ for which $\rho_{E,\ell^\infty}$ can be nonsurjective. There are two known $j$-invariants for which elliptic curves are nonsurjective at $\ell=37$: these are $j=-7 \cdot 11^{3}$ and
$j = -7 \cdot 137^{3} \cdot 2083^{3}$. These both arise from points on the modular curve $X_{0}(37)$, whose rational points were determined in the 1970s \cite{MazurSwD, Velu}. 

Throughout this paper, we say that a prime $\ell$ is \emph{surjective} (resp.\ \emph{nonsurjective}) for $E$ if $\rho_{E,\ell^\infty}$ is surjective (resp.\ nonsurjective). In view of the continued theoretical progress and extensive numerical data that has become available in recent years, it is now widely conjectured \cite{3482279, Zy2015a} that $37$ is the largest possible nonsurjective prime. That is, for all non-CM elliptic curves $E/\Q$ and for all primes $
\ell$,
\begin{equation} \label{E:SUQ}
\ell > 37 \quad \implies \quad \rho_{E,\ell^\infty} \text{ is surjective}.
\end{equation}
Considerable effort has gone toward proving that \eqref{E:SUQ} holds, and while it remains an open problem, significant progress has been made. By studying rational points on modular curves, a series of results \cite{MR387283,MR0644559,Ma1978,MR2753610}, most recently including a work of Furio and Lombardo \cite{furio2023serresuniformityquestionproper}, shows that 
\[
\ell > 37 \quad \implies \quad \rho_{E,\ell^\infty} \text{ is surjective} \quad \textit{or} \quad \im \rho_{E,\ell} = C_{\rm{ns}}^+(\ell)
\]
where $C_{\rm{ns}}^+(\ell) \subseteq \GL_2(\F_\ell)$ denotes the normalizer of a nonsplit Cartan subgroup of $\GL_2(\F_\ell)$.

A related difficult question is the problem of bounding the number of nonsurjective primes. The LMFDB \cite{LMFDB} contains a collection of elliptic curves together with images of $\rho_{E,\ell^{\infty}}$. Examining this data suggests that there is no non-CM elliptic curve $E/\Q$ with four or more nonsurjective primes, and the only
cases of three nonsurjective primes that arise are $\{2,3,5\}$ and $\{2,3,7\}$. In \cite[Theorem 8.1(2)]{BELOV}, it is shown that one way a non-cuspidal, non-CM isolated point on $X_{1}(n)$ can arise is from an elliptic curve with three nonsurjective primes $3 < \ell_{1} < \ell_{2}$. Motivated by this data, we consider the problem of giving a uniform bound on the smallest surjective prime for non-CM elliptic curves.

\begin{theorem} \label{T:main} If $E/\mathbb{Q}$ is a non-CM elliptic curve, then the smallest surjective prime for $E$ is at most $7$. Moreover, if the $j$-invariant of $E$ is not one of the six $j$-invariants in the set
\[  \set{
-2^{-3}\cdot 5^2\cdot 241^3,
-2^4\cdot 3^2\cdot 13^3,
-2^{-5}\cdot 5\cdot 29^3,
-2^{-1}\cdot 5^2,
2^4\cdot 3^3,
2^{-15}\cdot 5\cdot 211^3},
\]
then the smallest surjective prime for $E$ is at most $5$.
\end{theorem}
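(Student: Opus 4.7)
The plan is to prove both assertions together by classifying all non-CM $E/\Q$ for which $\rho_{E,\ell^\infty}$ is nonsurjective for every $\ell \in \{2,3,5\}$, and then verifying that the resulting finite list consists of precisely the six $j$-invariants stated. The refined claim is immediate from such a classification, since any non-CM $E$ not on the list must have some $\ell \in \{2,3,5\}$ surjective. The original claim then follows by additionally checking that $\rho_{E,7^\infty}$ is surjective for each of the six exceptional $j$-invariants, a bounded direct computation.

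Each nonsurjectivity condition is translated to a modular curve condition: $j(E)$ must lie in the $j$-image of some modular curve $X_{H_\ell}$, where $H_\ell$ ranges over the (finitely many, up to conjugacy) proper closed subgroups of $\GL_2(\Z_\ell)$ that arise as $\ell$-adic images of non-CM elliptic curves over $\Q$. For $\ell = 5$, the maximal subgroup structure of $\GL_2(\F_5)$, together with the standard lifting statement that $\rho_{E,5}$ surjective implies $\rho_{E,5^\infty}$ surjective, reduces matters to the classes of maximal subgroups: Borel, Cartan normalizer (split or non-split), and exceptional ($A_4$-type or $S_4$-type). For $\ell \in \{2, 3\}$ I would use the classifications of the $\ell$-adic images of non-CM elliptic curves over $\Q$ from Rouse--Sutherland--Zureick-Brown (for $\ell = 2$) and its analogue at $\ell = 3$. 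For each admissible triple $(H_2, H_3, H_5)$, the fiber product
\[
X_{H_2, H_3, H_5} := X_{H_2} \times_{X(1)} X_{H_3} \times_{X(1)} X_{H_5}
\]
has, as its non-cuspidal non-CM rational points, exactly the $j$-invariants realizing all three nonsurjectivities.

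The case analysis organizes around whether $H_5$ is of Borel type. If it is, then $E$ admits a rational $5$-isogeny, and when $H_2$ or $H_3$ is also Borel the fiber product projects to an isogeny-level modular curve like $X_0(10)$ or $X_0(15)$, whose rational points are completely known; when $H_2$ or $H_3$ is non-Borel, one obtains fiber products of modest genus whose rational points can be determined directly. If $H_5$ is non-Borel, then $X_{H_5}$ already has only finitely many non-CM $\Q$-points (from the known classifications of mod-$5$ split Cartan normalizer, non-split Cartan normalizer, and exceptional images), and the $2$- and $3$-adic constraints are imposed on each candidate $j$-invariant. Assembling the surviving cases should yield exactly the six listed $j$-invariants, after which surjectivity of $\rho_{E,7^\infty}$ is verified for each by examining Frobenius images at small primes of good reduction.

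The principal obstacle is the combinatorial bookkeeping: there are many admissible triples $(H_2, H_3, H_5)$, and the subtle entanglement between the mod-$\ell$ representations (governed by the cyclotomic character on determinants and by group-theoretic compatibility) must be respected. Many fiber products require explicit models before their rational points can be determined. The LMFDB database of modular curves together with the Magma infrastructure of Rouse--Sutherland--Zureick-Brown makes these computations feasible; no single deep rational-points theorem is the bottleneck, since Kenku's isogeny classification and the known finiteness results for Cartan-normalizer and exceptional mod-$5$ images already leave only finitely many candidate $j$-invariants in each branch of the case analysis.
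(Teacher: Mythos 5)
Your high-level strategy matches the paper's: reduce to a finite set of fiber products of modular curves, determine their rational points, then verify $7$-adic surjectivity for the resulting finite list of $j$-invariants. However, there is a genuine gap in your key claim that when $H_5$ is non-Borel, ``$X_{H_5}$ already has only finitely many non-CM $\Q$-points.'' This is false. The modular curves $X_{S_4}(5)$ (RSZB \texttt{5.5.0.1}) and $X_{\rm ns}^+(5)$ (\texttt{5.10.0.1}) both have genus $0$ with a rational point, hence admit rational parametrizations of the $j$-line of degree $5$ and $10$ respectively, and consequently realize infinitely many non-CM $j$-invariants. (The same is true of $X_{\rm sp}^+(5)$.) There is no ``known classification'' giving finitely many mod-$5$ Cartan-normalizer or exceptional non-CM points; such finiteness only kicks in for larger $\ell$ where the genus grows. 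So the shortcut ``impose the $2$- and $3$-adic constraints on each candidate $j$-invariant'' does not apply: there are infinitely many candidates, and you must do the fiber-product rational-points work in the non-Borel-at-$5$ branch just as in the Borel branch. Once this is corrected, your approach collapses to the paper's: enumerate maximal closed subgroups of $\GL_2(\Z_\ell)$ for $\ell \in \{2,3,5\}$ (there are $6$, $3$, and $3$ respectively) and study the $54$ triple fiber products, using pairwise fiber products where possible.

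One further omission worth noting: several of these fiber products are genuinely difficult to handle directly, and the paper relies on the ``curious group'' phenomenon (Daniels--Gonz\'alez-Jim\'enez, Chiloyan) to avoid computing rational points on curves like $\texttt{9.27.0.1} \times X_{\rm ns}^+(4)$ (genus $6$) and $X_{\rm ns}^+(3) \times X_{\rm ns}^+(4) \times X_{S_4}(5)$; in the latter, every rational point factors through $X_{\rm ns}^+(3) \times X_{\rm ns}^+(4) \times X_{\rm sp}^+(5)$, which is far more tractable. Your sketch does not anticipate these obstructions. Also, you do not need the full RSZB classification of $2$-adic images (or an analogue at $3$); nonsurjectivity of $\rho_{E,\ell^\infty}$ is equivalent to the image lying in some maximal closed subgroup, which is all that is used. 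Finally, the paper organizes the case analysis by the $3$-adic image rather than the $5$-adic one, which turns out to be computationally advantageous (the constraint \texttt{9.27.0.1} is restrictive enough to kill many fibered curves $3$-adically), though either organization can in principle be made to work.
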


An immediate consequence of Theorem \ref{T:main} is that at least one of $\rho_{E,2}$, $\rho_{E,3},$ or $\rho_{E,5}$ must be surjective for any non-CM elliptic curve $E/ \mathbb{Q}$. This follows since the mod $2$ Galois representation of an elliptic curve with one of the six $j$-invariants listed in the theorem is necessarily surjective. Furthermore, our result is sharp in a rather strong sense. As a consequence of the proof of \cite[Theorem A]{MR3957898}, there are infinitely many $j$-invariants of elliptic curves for which the smallest surjective prime is $5$. On the other hand, an elliptic curve with one of the six $j$-invariants listed in Theorem \ref{T:main} has $7$ as its smallest surjective prime. We now give an example of such an elliptic curve.

\begin{example}
    Consider the elliptic curve $E/\mathbb{Q}$ given by the Weierstrass equation
    \[     y^2+xy+y=x^3-126x-552.    \]
    This curve has LMFDB \cite{LMFDB} label \texttt{50.a1}, is non-CM, and has $j$-invariant
    \[
    j_E = -2^{-3}\cdot 5^2\cdot 241^3.
    \]
    One can check (and it is noted on the curve's LMFDB page) that
    \[
    \im \rho_{E,2^\infty} = \texttt{8.2.0.1}, \quad \im \rho_{E,3^\infty} = \texttt{3.8.0.2}, \quad \text{and} \quad \im \rho_{E,5^\infty} =    \texttt{5.24.0.4},
    \]
    where the subgroups are identified using their Rouse--Sutherland--Zureick-Brown (RSZB) labels \cite{MR4468989}. Furthermore, $\rho_{E,\ell^\infty}$ is surjective for all primes $\ell \geq 7$. Thus, $7$ is the smallest surjective prime for $E$.
\end{example}

The question of bounding the smallest surjective prime for elliptic curves was previously considered in an article of Larson and Vaintrob \cite[Lemma 22, Theorem 25]{MR3161774}, which noted that for non-CM elliptic curves over a number field $K$, the smallest surjective prime is bounded above by a constant depending only on $K$. However, their bound is ineffective, even in the case $K = \mathbb{Q}$, due to the ineffectiveness of Faltings's theorem. They also proved an effective bound that depends on the conductor of the elliptic curve. This question has also arisen in the context of Jacobians of genus $2$ curves over $\mathbb{Q}$. The article \cite{MR4732686} builds on work of Dieulefait \cite{MR1969642} to implement a practical algorithm for computing the nonsurjective primes associated with the Jacobian $A$ of a genus $2$ curve with a trivial geometric endomorphism ring. To bound the complexity of the algorithm, the authors (one of whom is the second-named author) proved in \cite[Proof of Corollary 3.16]{MR4732686} a bound on the smallest surjective prime of $A$, in terms of the conductor of $A$.

In the present article, we restrict our attention to elliptic curves over $\mathbb{Q}$. Here, we note that an affirmative answer to Serre's uniformity question would immediately yield a uniform (though non-optimal) bound on the smallest surjective prime. If it could be proven that the genus $8$ modular curve $X_{\rm{ns}}^+(19)$ has no non-cuspidal, non-CM rational points, then this would similarly provide a uniform (though still non-optimal) bound on the smallest surjective prime.

The main tool we use is the theory of modular curves $X_H$ associated with subgroups $H \subseteq \GL_{2}(\Z/N\Z)$, which we discuss in Section \ref{S:Preliminaries}. The proof of Theorem~\ref{T:main} is given in Section \ref{S:Approach}. It boils down to studying finitely many modular curves $X_H$ and determining their rational points. One observation that simplifies our work is the enumeration
of ``curious groups'' by Daniels--Gonz\'alez-Jim\'enez \cite{DG}, and more systematically by
Chiloyan \cite{chiloyan2023classificationcuriousgaloisgroups}. In particular, there are two examples (both given in \cite{DG}) of a group $H$ for which $X_{H}$ is a positive rank elliptic curve and an index $3$ subgroup $H_{2} \subseteq H$ for which $X_{H_{2}}$ is also a positive rank elliptic curve, where the natural map $X_{H_{2}} \to X_{H}$ is a bijection on rational points. Our work relies in a crucial way on computations, which we carry out in Magma \cite{MAGMA}. The code  accompanying this paper is available through a public repository \cite{GMR-GitHub}: \\

\centerline{\url{https://github.com/maylejacobj/SmallestSurjectivePrime}}

\subsection{Acknowledgments} This article stemmed from some initial conversations at the Palmetto Number Theory Series (PANTS) XXXVIII conference at Wake Forest University. We are thankful to the organizers (one of whom was the third-named author) and funders of this conference.

\section{Preliminaries} \label{S:Preliminaries}

We begin with a brief discussion of modular curves, since they will serve as our main tool. For a more thorough introduction, we refer the reader to \cite{Zy2015a,zywina2022explicit,MR337993}. Modular curves parameterize elliptic curves with a specified level structure. More specifically, given a subgroup $H \subseteq \GL_{2}(\Z/N\Z)$ that contains $-I \coloneqq \begin{bsmallmatrix} -1 & 0 \\ 0 & -1 \end{bsmallmatrix}$, there is a curve $X_{H}$ together with a map $j \colon X_{H} \to \P^{1}$ characterized by the property that, for any number field $K$, an
elliptic curve $E/K$ with $j(E) \not\in \{ 0, 1728 \}$ has the image of $\rho_{E,N} \colon \Gal(\overline{\Q}/K) \to \GL_2(\Z/N\Z)$ contained in $H$ if and only if $j(E)$ is contained in the set $j(X_{H}(K))$ (see \cite[Proposition 3.3]{Zy2015a}). The preimages of $(1 : 0)$ under $j \colon X_{H} \to \mathbb{P}^{1}$ are called \emph{cusps} of $X_H$.

The curve $X_{H}$ is smooth and projective, and if the determinant map $\det\colon H \to (\Z/N\Z)^{\times}$ is surjective, then $X_{H}$ is geometrically integral and defined over $\Q$. If the genus of $X_{H}$ is at least $2$, then by Faltings's theorem,  $X_{H}(\Q)$ is finite. Under the assumption that $\det \colon H \to (\Z/N\Z)^{\times}$ is surjective, the genus of $X_{H}$ tends to grow with the index of $H$ in $\GL_{2}(\Z/N\Z)$. For this reason, we will seek subgroups $H$ of sufficiently large index so that $X_{H}(\Q)$ is finite. 

Provably finding all rational points on a curve of genus at least $2$ is challenging in general,
but in this paper, we rely only on three relatively simple techniques. First, if $X/\Q$ is a curve with $X(\Q_{p}) = \emptyset$ for some prime $p$, then $X(\Q) = \emptyset$. Second, if $X$ has genus $2$ and the rank of the Jacobian of $X$ is $0$, then the rational points can be effectively determined using Chabauty's method, which is implemented in Magma in this case. Finally, if $X$ admits a morphism $\phi \colon X \to E$ to an elliptic curve $E/\Q$ that is known to have rank $0$, then we can effectively determine $E(\Q)$, and the rational points on $X$ are given by $X(\Q) = \phi^{-1}(E(\Q))$.

We will use the Rouse--Sutherland--Zureick-Brown (RSZB) label \texttt{N.i.g.n} to identify subgroups $H \subseteq \GL_2(\Z/N\Z)$. Here, $N$ denotes the level of $H$, $i$ denotes the index of $H$ in $\GL_{2}(\Z/N\Z)$, $g$ denotes the genus of the modular curve $X_H$, and $n$ is a tiebreaker as described in \cite[Section 2.4]{MR4468989}. With some abuse of notation, we also refer to $X_H$ by the RSZB label of $H$. For example, we will write ${\texttt{4.2.0.1}}$ to  denote both the subgroup of $\GL_2(\Z/4\Z)$ and the modular curve $X_{\texttt{4.2.0.1}}$.  Several modular curves we will encounter have established names in the literature, and in these cases, we will use their common names instead. For example, $X_0(2)$ denotes the modular curve associated with the group \texttt{2.3.0.1}.

We will be particularly interested in modular curves that are themselves fiber products of modular curves. If $N_1$ and $N_2$ are relatively prime and $H_{1}$ and $H_{2}$ are subgroups of $\GL_{2}(\Z/N_1\Z)$ and $\GL_{2}(\Z/N_2\Z)$, respectively, then the modular curve associated with the group
\[
  H \coloneqq \{ M \in \GL_{2}(\Z/N_1 N_2\Z) : M \bmod{N_1} \in H_{1} \text{ and } M \bmod{N_2} \in H_{2} \}
\]
is the fiber product of $X_{H_{1}}$ and $X_{H_{2}}$ over the $j$-line, which we denote by $X_{H_{1}} \times X_{H_{2}}$. This curve fits into the following commutative diagram:

\begin{center}
\begin{tikzcd}
X_{H_{1}} \times X_{H_{2}} \arrow[r] \arrow[d] & X_{H_{1}} \arrow[d, "j_{H_{1}}"] \\
X_{H_{2}} \arrow[r, "j_{H_{2}}"']                          & \mathbb{P}^{1}                  
\end{tikzcd}
\end{center}

A natural way to construct a model for $X_{H_1} \times X_{H_2}$ is to equate the $j$-maps  $j_{H_1}$ and $j_{H_2}$ (see Example \ref{Ex:X03XS45}). For the groups $H_1$ and $H_2$, we will primarily be interested in maximal closed subgroups of $\GL_2(\Z_\ell)$ with surjective determinant, which can be viewed as subgroups of $\GL_2(\Z/\ell^k\Z)$ for sufficiently large $k$. Table \ref{T:Models} lists all such subgroups for $\ell = 2, 3,$ and $5$, together with the $j$-maps  of their associated modular curves. These $j$-maps were computed in Sutherland--Zywina \cite{MR3671434}. 

\renewcommand{\arraystretch}{1.25}
\begin{table}[h]
\begin{tabular}{|lll|}
\hline
RSZB            & Common Name            & The $j$-map $X_H \to \mathbb{P}^1(\mathbb{Q})$                                                                                \\ \hline
\texttt{2.2.0.1}  & $X_{\rm{ns}}(2)$ & $t^2+1728$                                                                           \\
\texttt{2.3.0.1}  & $X_0(2)$               & $\frac{(256-t)^3}{t^2}$                                                              \\
\texttt{4.2.0.1}  & ---                    & $-t^2 + 1728$                                                                        \\
\texttt{4.4.0.1}  & $X_{\rm{ns}}^+(4)$ & $4t^3(8-t)$                                                                          \\
\texttt{8.2.0.1}  & ---                    & $-2t^2+1728$                                                                         \\
\texttt{8.2.0.2}  & ---                    & $2t^2+1728$                                                                          \\
\texttt{3.3.0.1}  & $X_{\rm{ns}}^+(3)$ & $t^3$                                                                                \\
\texttt{3.4.0.1}  & $X_0(3)$               & $\frac{(t+3)^3(t+27)}{t}$                                                            \\
\texttt{9.27.0.1} & ---                    & $\frac{3^7(t^2-1)^3(t^6+3t^5+6t^4+t^3-3t^2+12t+16)^3(2t^3+3t^2-3t-5)}{(t^3-3t-1)^9}$ \\
\texttt{5.5.0.1}  & $X_{S_4}(5)$           & $t^3(t^2+5t+40)$                                                                     \\
\texttt{5.6.0.1}  & $X_0(5)$               & $\frac{(t^2+10t+5)^3}{t}$                                                            \\
\texttt{5.10.0.1} & $X_{\rm{ns}}^+(5)$ & $\frac{8000t^3(t+1)(t^2-5t+10)^3}{(t^2-5)^5}$                                        \\ \hline
\end{tabular}
\caption{All maximal closed subgroups $H$ of $\GL_2(\Z_\ell)$ with surjective determinant for $\ell = 2,3,$ and $5$, along with the $j$-maps of their associated modular curves.}  \label{T:Models} \vspace{-1.5em}
\end{table} 

We now show how these $j$-maps can be used to obtain a model for a fiber product.

\begin{example} \label{Ex:X03XS45}
    The modular curves $X_0(3)$ and $X_{S_4}(5)$ have genus $0$, with $j$-maps given by 
    \[
    j_1(t) = \frac{(t+3)^3(t+27)}{t}
    \quad \text{and} \quad
    j_2(t) = t^3(t^2+5t+40),
    \]
    respectively. By equating the numerator of $j_1(x) - j_2(y)$ with $0$, we obtain an affine plane model
    \[
            x^4 + 36x^3 + 270x^2 - xy^5 - 5xy^4 - 40xy^3 + 756x + 729 = 0
    \]
    for the fiber product $X_0(3) \times X_{S_4}(5)$.
\end{example}

The method illustrated by this example produces a plane model $X_{H_1} \times X_{H_2}$ that typically is singular. In fact, it is known that there are only finitely many modular curves (up to isomorphism over $\overline{\mathbb{Q}}$) that admit a nonsingular plane model \cite{MR4563688}. 
If $X$ is a singular curve and $P \in X(\Q)$ is a singular point, we can consider a desingularization $\tilde{X} \to X$. The preimages of $P$ on $\tilde{X}$ are in bijection with places of the function field $\Q(X)$. There can be several preimages of $P$ on $\tilde{X}$, sometimes with different fields of definition. Nonetheless, if $X$ and $Y$ are two curves defined over $\Q$ that are birational, then there is a bijection between the set of rational places on $X$ and the set of rational places on $Y$. This is because
the desingularization $\tilde{X}$ can be constructed directly from the function field of $X$ (see \cite[Subsection IV.1.1]{MR918564}), together with the fact that two curves are birational if and only if they have isomorphic function fields.

In a few cases, we will require a better model than the one provided by the method illustrated in Example \ref{Ex:X03XS45}. Specifically, for the modular curve $X_0(10) = X_0(2) \times X_0(5)$, we will instead use the  model from \cite{MR3084348}. For the curves $\texttt{9.27.0.1} \times X_{S_4}(5)$ and $\texttt{9.27.0.1} \times X_{0}(5)$, we will instead use Zywina's \texttt{FindModelOfXG} \cite{GitHubZywina} function in Magma. However, for all other fiber products we encounter in this paper, the method of Example \ref{Ex:X03XS45} will suffice.

We conclude our preliminaries with a lemma on nonsurjective primes under isomorphism. Recall the standard fact that the $\overline{\Q}$-isomorphism class of an elliptic curve is determined by its $j$-invariant \cite[Section III.1, Proposition 1.4]{MR2514094}. It is important to note that the surjectivity of the $\ell$-adic Galois representation of an elliptic curve over $\Q$ depends only on its $\overline{\Q}$-isomorphism class. While this fact is well-known to experts in the area, we were unable to locate a reference that proves this exactly, so we include a proof below for completeness (cf.\ \cite[Lemma 3.1]{MR4549128}).
\begin{lemma} \label{L:SurjQBar}
Let $E/\Q$ and $E' / \mathbb{Q}$ be non-CM elliptic curves that are $\overline{\Q}$-isomorphic. Then, for all prime numbers $\ell$, the $\ell$-adic Galois representation $\rho_{E,\ell^\infty}$ is surjective if and only if $\rho_{E',\ell^\infty}$ is surjective.
\end{lemma}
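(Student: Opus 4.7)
The plan is to reduce to quadratic twists and then compare $\ell$-adic images via the observation that $-I$ is a square in $\GL_2(\Z_\ell)$. Since $E$ and $E'$ are non-CM, their common $j$-invariant lies outside $\{0,1728\}$, so $\Aut_{\Qbar}(E) = \{\pm 1\}$ and the standard classification of twists (e.g., \cite[Chapter~X]{MR2514094}) produces $d \in \Q^\times$ with $E'$ $\Q$-isomorphic to the quadratic twist $E^d$. Writing $\chi_d \colon \GalQ \to \{\pm 1\}$ for the quadratic character of $\Q(\sqrt{d})/\Q$, a short cocycle computation — fixing an isomorphism $\phi \colon E \to E'$ defined over $\Q(\sqrt{d})$ and using that $\phi^\sigma \in \{\pm \phi\}$ because $\Aut_{\Qbar}(E) = \{\pm 1\}$ — yields the key identity
\[
    \rho_{E', \ell^\infty}(\sigma) = \chi_d(\sigma)\, \rho_{E, \ell^\infty}(\sigma) \qquad \text{for all } \sigma \in \GalQ.
\]

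Setting $G = \im \rho_{E, \ell^\infty}$ and $G' = \im \rho_{E', \ell^\infty}$, the identity immediately gives $G' \subseteq \langle G, -I\rangle$, and the symmetric identity $\rho_{E, \ell^\infty} = \chi_d \cdot \rho_{E', \ell^\infty}$ (valid since $\chi_d^2 = 1$) gives the reverse inclusion, so $\langle G, -I \rangle = \langle G', -I \rangle$. By the symmetry of the statement between $E$ and $E'$, it suffices to prove that $G = \GL_2(\Z_\ell)$ implies $G' = \GL_2(\Z_\ell)$. Assuming $G = \GL_2(\Z_\ell)$, the equality above forces $\langle G', -I\rangle = \GL_2(\Z_\ell)$, so either $G' = \GL_2(\Z_\ell)$ or $G'$ is an index-$2$ subgroup of $\GL_2(\Z_\ell)$ not containing $-I$; in the latter case $G'$ would be the kernel of a continuous quadratic character $\chi$ of $\GL_2(\Z_\ell)$ with $\chi(-I) = -1$.

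The crucial — and essentially only substantive — step is ruling out this last possibility. The observation is that $J = \bigl(\begin{smallmatrix} 0 & -1 \\ 1 & 0 \end{smallmatrix}\bigr) \in \SL_2(\Z) \subseteq \GL_2(\Z_\ell)$ satisfies $J^2 = -I$, so $-I$ is a square in $\GL_2(\Z_\ell)$ for every prime $\ell$. Since any homomorphism to an abelian group of exponent $2$ vanishes on squares, $\chi(-I) = 1$, contradicting $\chi(-I) = -1$ and forcing $G' = \GL_2(\Z_\ell)$. I expect this square-root observation to be the only mildly delicate point; it is precisely what makes the argument uniform across all primes $\ell$, and in particular handles $\ell = 2$, where $-I$ collapses to $I$ modulo $\ell$ but remains a genuinely nontrivial central element at deeper levels of $\GL_2(\Z_2)$.
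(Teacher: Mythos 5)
Your proof is correct, and it takes a genuinely different route from the paper's. Both proofs start the same way: reduce to a quadratic twist and deduce $\rho_{E',\ell^\infty} = \chi_d \cdot \rho_{E,\ell^\infty}$, hence $\pm \im\rho_{E',\ell^\infty} = \GL_2(\Z_\ell)$. From there the paper splits into cases. For $\ell \geq 5$ it invokes the proof of \cite[Lemma 3.1]{MR4549128} to get surjectivity mod $\ell$, then lifts via Serre's result \cite[IV Sec.\ 3.4]{MR1484415}; for $\ell = 2$ and $\ell = 3$ it invokes the Dokchitser--Dokchitser criteria \cite[Corollary 2.13]{MR2778661} to reduce to level $8$ or $9$, and then performs a Magma check that every index-$2$ subgroup of $\GL_2(\Z/8\Z)$ (resp.\ $\GL_2(\Z/9\Z)$) with surjective determinant contains $-I$. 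Your argument replaces all of this with a single observation: $-I = J^2$ for $J = \bigl(\begin{smallmatrix}0 & -1 \\ 1 & 0\end{smallmatrix}\bigr) \in \GL_2(\Z_\ell)$, and since any index-$2$ subgroup of a group contains every square, any index-$2$ subgroup $G'$ with $\pm G' = \GL_2(\Z_\ell)$ must already contain $-I$ and hence equal $\GL_2(\Z_\ell)$. This eliminates the case analysis, requires no external lifting results, and requires no computation; it is uniform in $\ell$ (as you correctly emphasize, it in particular handles $\ell=2,3$ for free). The trade-off is that the paper's version is a light application of standard citations, whereas yours is a self-contained group-theoretic argument; but yours is, if anything, the more conceptually transparent and economical proof, and I see no gap in it.
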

\begin{proof}
    To prove this lemma, it suffices to show that if $\rho_{E,\ell^\infty}$ is surjective, then $\rho_{E',\ell^\infty}$ is also surjective. Since $E$ and $E'$ are non-CM elliptic curves that are isomorphic over $\overline{\Q}$, they must be quadratic twists by \cite[Sec.\ X.5]{MR2514094}. Therefore, for each integer $k\geq 1$, we have
    \( \rho_{E',\ell^k}=\chi_{\ell^k}\cdot \rho_{E,\ell^k}\)
    for some quadratic character $\chi_{\ell^k}\colon \Gal(\overline{\Q}/\Q)\rightarrow \lbrace \pm 1\rbrace$. Thus, the  surjectivity of $\rho_{E,\ell^k}$ implies that
    \begin{equation} \label{E:pm}
    \pm \im \rho_{E',\ell^k} = \GL_2(\Z/\ell^k\Z).
    \end{equation}
    
    Assuming that $\ell \geq 5$, the proof of \cite[Lemma 3.1]{MR4549128} implies that  $\rho_{E',\ell}$ is surjective, and hence   $\rho_{E',\ell^\infty}$ is also surjective by \cite[IV Sec.\ 3.4]{MR1484415}. In the case that $\ell = 2$, by \cite[Corollary 2.13.(i)]{MR2778661}, we know that $\rho_{E',2^\infty}$ is surjective if and only if $\rho_{E',8}$ is surjective. By \eqref{E:pm}, the index of $\im \rho_{E',8}$ in $\GL_2(\Z/8\Z)$ divides 2. However, a quick Magma computation reveals that all index $2$ subgroups of $\GL_2(\Z/8\Z)$ with surjective determinant contain $-I$, which forces $\rho_{E',8}$ to be surjective by \eqref{E:pm}, and thus $\rho_{E',2^\infty}$ is surjective. The case where $\ell = 3$ follows in an almost identical way, using \cite[Corollary 2.13.(ii)]{MR2778661}.
\end{proof}

\section{Proof of Theorem \ref{T:main}} \label{S:Approach}

In this section, we prove Theorem \ref{T:main}. The broad aim is to establish that there are only finitely many $\overline{\Q}$-isomorphism classes of non-CM elliptic curves over $\mathbb{Q}$ whose $2$-adic, $3$-adic, and $5$-adic Galois representations are simultaneously nonsurjective, and to determine the finite set of $j$-invariants corresponding to these $\overline{\Q}$-isomorphism classes. It is then straightforward to check that none of these $j$-invariants come from an elliptic curve whose smallest surjective prime exceeds $7$. As we will see, the proof boils down to numerous rational point computations, which we describe below. These computations were carried out in Magma, using the code in our GitHub repository \cite{GMR-GitHub}.  

To begin, let $E/\mathbb{Q}$ be an elliptic curve without complex multiplication, and assume that
\begin{equation} \label{E:Assumption}
\im \rho_{E,2^\infty} \subsetneq \GL_2(\Z_2), \quad \im \rho_{E,3^\infty} \subsetneq \GL_2(\Z_3), \text{ and} \quad \im \rho_{E,5^\infty} \subsetneq \GL_2(\Z_5). 
\end{equation}
Then $E$ corresponds to a non-cuspidal, non-CM rational point on the fiber product
\begin{equation} \label{E:FiberProd}
    X_{H_2} \times X_{H_3} \times X_{H_5}
\end{equation}
where, for each $\ell \in \{2,3,5\}$, the group $H_\ell$ is a subgroup of $\GL_2(\Z/\ell^k\Z)$ corresponding to a maximal subgroup of $\GL_2(\Z_\ell)$ appearing in Table \ref{T:Models}. Since there are six possibilities for $H_2$, three for $H_3$, and three for $H_5$, there are $54$ modular curves of the form \eqref{E:FiberProd} we may need to  consider. However, we can get away with studying far fewer than this, since it is often the case that the fiber product of just two modular curves among $X_{H_2}, X_{H_3},$ and $X_{H_5}$ already has only finitely many rational points. 

Based on some experimentation, it makes sense to begin by considering the $3$-adic image.  There are three maximal closed subgroups of $\GL_2(\Z_3)$ with surjective determinant, associated with the modular curves $X_0(3)$, \texttt{9.27.0.1}, and $X_{\rm{ns}}^+(3)$. Because each of these curves has infinitely many rational points, we must bring in more information from our assumption \eqref{E:Assumption}. To begin, let us first analyze the case that the $3$-adic image arises from a rational point on $X_0(3)$. For this case, we elect to look at fiber products with the modular curves associated with maximal closed subgroups of $\GL_2(\Z_5)$ with surjective determinant: $X_{S_4}(5)$, $X_{0}(5)$, and $X_{{\rm ns}}^{+}(5)$. Consider the tree displayed below, which summarizes our approach in this first case of $X_0(3)$. 

\adjustbox{scale=0.75,center}{
\begin{tikzcd}[column sep=small]
                   & X_{0}(3) \arrow[d] \arrow[ld] \arrow[rd] &                               \\
X_{S_4}(5) \arrow[d] & X_{0}(5) \arrow[d]            & X_{{\rm ns}}^{+}(5) \arrow[d] \\
\text{Genus $1$ rank $0$}   & \text{Genus $1$ rank $0$}                    & \text{Genus $2$ rank 0}        
\end{tikzcd}} 

\noindent Each path down the tree represents a fiber product of modular curves, and we must provably determine the rational points on each of them. The key idea is that if \eqref{E:Assumption} holds and the $3$-adic image arises from $X_0(3)$, then  
$E$ must correspond to a rational point on one of these three fiber products. The text description at the end of each path  briefly notes how this will be accomplished. We now expand on these notes, giving some details about each of the rational point computations.
\begin{itemize}
    \item $X_0(3) \times X_{S_4}(5)$: In Example \ref{Ex:X03XS45}, we obtained the affine model 
    \[
    x^4 + 36x^3 + 270x^2 - xy^5 - 5xy^4 - 40xy^3 + 756x + 729 = 0.
    \]
    Let $X$ be the projective closure of this curve. Using Magma, we find that $X$ is birational to the elliptic curve $E$ defined by the Weierstrass equation
    \[
    y^2 + y = x^3 + x^2 + 2x + 4.
    \]
    This elliptic curve has rank $0$, so all its rational points are torsion, which can be effectively computed. Doing so, we find that
    \[
    E(\mathbb{Q}) = \{ \mathcal{O}, (-1,-2), (-1,1), (2,-5), (2,4) \},
    \]
    where $\mathcal{O}$ denotes the point at infinity. Pulling these back, we determine that
    \[
    X(\mathbb{Q}) = \{ (-81 : -13 : 1), (-27 : 0 : 1), (-9 : 2 : 1), (-3 : 0 : 1), (1 : 0 : 0), (0 : 1 : 0) \}.
    \]
    All of these points are nonsingular except $(-3 : 0 : 1)$ and $(1 : 0 : 0)$. There is only one place above the point $(-3 : 0 : 1)$, and it is not rational since its field of definition has degree $3$. There is a unique place above the point $(1 : 0 : 0)$, and it is rational. Therefore, $X$ has exactly $5$ rational places (which we already knew from the fact that $|E(\mathbb{Q})| = 5$). We list the rational places of $X$, along with an analysis of each, in Table \ref{T:C11} below.

    \begin{table}[h]
\begin{tabular}{|llll|}
\hline
Rational place & $j$-invariant & CM  & Nonsurjective primes \\ \hline
$(-81 : -13 : 1)$   & $-2^4\cdot 3^2\cdot 13^3$          & No  & $2,3,5$              \\
$(-27 : 0 : 1)$     & $0$                & Yes & ---                  \\
$(-9 : 2 : 1)$      & $2^4\cdot 3^3$              & No  & $2,3,5$              \\
$(1 : 0 : 0)$       & $\infty$           & --- & ---                  \\
$(0 : 1 : 0)$       & $\infty$           & --- & ---                  \\ \hline
\end{tabular}
\caption{Analysis of rational places of $X_0(3) \times X_{S_4}(5)$} \label{T:C11} \vspace{-1.5em}
\end{table}

\noindent To elaborate on the analysis, let us write $E_j$ to denote any elliptic curve with $j$-invariant $j$. We note that $E_0$ is CM, whereas $E_{-2^4\cdot 3^2\cdot 13^3}$ and $E_{2^4\cdot 3^3}$ are non-CM. For both of these curves, we use \cite{RSZB-GitHub} to compute that the $\ell$-adic Galois representation is nonsurjective for $\ell \in \{2,3,5\}$ and surjective for all primes $\ell \geq 7$. Finally, Lemma \ref{L:SurjQBar} implies that whether or not $\ell$ is surjective is invariant on $\overline{\Q}$-isomorphism classes.

    \item $X_0(3) \times X_0(5) = X_0(15)$: The rational points on $X_0(15)$ are well-known \cite{MR376533}, but we recompute them here for completeness using the same method as in the first case. Using the method of Example \ref{Ex:X03XS45}, we obtain the affine model
    \[
    x^4y + 36x^3y + 270x^2y - xy^6 - 30xy^5 - 315xy^4 - 1300xy^3 - 1575xy^2 + 6xy - 125x + 729y = 0.
    \]
    Let $X$ be the projective closure of this curve. We find that $X$ is birational to an elliptic curve $E$ of rank $0$. Pulling back the rational points of $E$, we determine that 
    \[
    X(\mathbb{Q}) = \{(-\tfrac{729}{2} : -40 : 1), (-32 : -\tfrac{25}{2} : 1), (-\tfrac{729}{32} : -\tfrac{25}{8} : 1), (-2 : -10 : 1), (1 : 0 : 0), (0:1:0), (0 : 0 : 1) \}.
    \]
    All of these points are nonsingular except $(1 : 0 : 0)$. There are two places above the point $(1 : 0 : 0)$, both of which are rational. We denote these by $(1 : 0 : 0)_1$ and $(1 : 0 : 0)_2$. We list the rational places of $X$, along with an analysis of each, in Table \ref{T:C12} below.
    
    \begin{table}[h]
    \begin{tabular}{|llll|} \hline
    Rational place & $j$-invariant & CM  & Nonsurjective primes \\ \hline
    $(-\frac{729}{2} : -40 : 1)$            & $-2^{-3}\cdot 5^2\cdot 241^3$ & No  & $2,3,5$   \\
    $(-32 : -\frac{25}{2} : 1)$             & $-2^{-5}\cdot 5\cdot 29^3$  & No  & $2,3,5$     \\
    $(-\frac{729}{32} : -\frac{25}{8} : 1)$ & $2^{-15}\cdot 5 \cdot 211^3$ & No  & $2,3,5$  \\
    $(-2 : -10 : 1)$                        & $-2^{-1}\cdot 5^2$         & No  & $2,3,5$  \\
    $(1 : 0 : 0)_1$                           & $\infty$                  & ---  & --- \\
    $(1 : 0 : 0)_2$                           & $\infty$                  & ---  & --- \\
    $(0 : 1 : 0)$                           & $\infty$                 & ---  & ---  \\
    $(0 : 0 : 1)$                           & $\infty$   & ---  & --- \\ \hline
    \end{tabular}
        \caption{Analysis of rational places of $X_0(3) \times X_0(5)$} \label{T:C12} \vspace{-1.5em}
    \end{table}
    
    \noindent Each of the non-cuspidal $j$-invariants correspond to non-CM elliptic curves for which the $\ell$-adic Galois representation is nonsurjective for $\ell \in \{2,3,5\}$ and surjective for all $\ell \geq 7$.
        
    \item $X_0(3) \times  X_{{\rm ns}}^{+}(5)$: Using the method of Example \ref{Ex:X03XS45}, we obtain the affine model
    \begin{align*}
    0 = x^4&y^{10} - 25x^4y^8 + 250x^4y^6 - 1250x^4y^4 + 3125x^4y^2 - 3125x^4 + 36x^3y^{10} - 900x^3y^8 \\ 
    &+ 9000x^3y^6 - 45000x^3y^4 + 112500x^3y^2 - 112500x^3 + 270x^2y^{10} - 6750x^2y^8 \\
    &+ 67500x^2y^6 - 337500x^2y^4 + 843750x^2y^2 - 843750x^2 - 7244xy^{10} + 112000xy^9 \\
    & - 738900xy^8 + 2560000xy^7 - 4811000xy^6 + 3600000xy^5 + 3055000xy^4 \\
    &- 8000000xy^3 + 2362500xy^2 - 2362500x + 729y^{10} - 18225y^8 + 182250y^6 \\
    &- 911250y^4 + 2278125y^2 - 2278125.
    \end{align*}
    Let $X$ be the projective closure of this curve. We find that $X$ is birational to  the hyperelliptic curve $H$ defined by the Weierstrass equation
    \[
    y^2 = 9x^6 - 6x^5 - 35x^4 + 40x^2 + 12x - 8.
    \]
    Since $H$ is a genus $2$ curve whose  Jacobian has rank $0$,  we can use Magma's \texttt{Chabauty0} command to compute its rational points. Performing this computation and pulling back to $X$, we obtain the complete set of rational points on $X$. Three of these points are singular, and none of the singular points have a rational place above them. Therefore, we have determined all rational places of $X$, which we list, along with their analysis, in Table \ref{T:C13}.
    
    \begin{table}[h]
    \begin{tabular}{|llll|} \hline
    Rational place & $j$-invariant & CM  & Nonsurjective primes \\ \hline
    $(-243 : 2 : 1)$ & $-2^{15}\cdot 3\cdot 5^3$ & Yes & --- \\
    $(-27 : -1 : 1)$ & $0$      & Yes & ---    \\
    $(-27 : 0 : 1)$  & $0$      & Yes & ---    \\
    $(-3 : -1 : 1)$  & $0$      & Yes & ---    \\
    $(27 : 3 : 1)$   & $2^4\cdot 3^3\cdot 5^3$  & Yes & ---   \\ \hline
    \end{tabular}
    \caption{Analysis of rational places of $X_0(3) \times X_{{\rm ns}}^{+}(5)$} \label{T:C13} \vspace{-1.5em}
    \end{table}
        
    \noindent  Notice that each rational place of $X$ corresponds to a CM $j$-invariant.
    
\end{itemize}

For the second case, we assume that the $3$-adic image arises from a rational point on \texttt{9.27.0.1}. For this case, we elect to look at fiber products with the modular curves associated with maximal closed subgroups of $\GL_2(\Z_2)$ with surjective determinant. One of the fiber products that arises is  $\texttt{9.27.0.1} \times X_{\rm{ns}}^+(4)$ which has only finitely many rational points, but it appears somewhat challenging to provably determine them. Thus, we approach this subcase by further considering 5-adic images. The tree below shows all the fiber products we will consider, along with a brief description of the approach for each.

\adjustbox{scale=0.75,center}{
\begin{tikzcd}
    && \texttt{9.27.0.1} &&&  \\
    {X_{\rm{ns}}(2)} & {X_{0}(2)} & {\texttt{4.2.0.1}} & {X_{\rm{ns}}^+(4)} & \texttt{8.2.0.1} & {\texttt{8.2.0.2}} \\
    {\text{No } \mathbb{Q}_{3} \text{ points}} & {\text{No } \mathbb{Q}_{3} \text{ points}} & {\text{No } \mathbb{Q}_{3} \text{ points}} && {\text{No } \mathbb{Q}_{3} \text{ points}} & {\text{No } \mathbb{Q}_{3} \text{ points}} \\
    && {X_{S_4}(5)} & {X_0(5)} & {X_{\rm{ns}}^+(5)} \\
    && {\text{No } \mathbb{Q}_{3} \text{ points}} & {\text{No } \mathbb{Q}_{3} \text{ points}} & \shortstack{\text{Covers a curve} \\ \text{in the literature}}
    \arrow[from=1-3, to=2-1]
    \arrow[from=1-3, to=2-2]
    \arrow[from=1-3, to=2-3]
    \arrow[from=1-3, to=2-4]
    \arrow[from=1-3, to=2-5]
    \arrow[from=1-3, to=2-6]
    \arrow[from=2-1, to=3-1]
    \arrow[from=2-2, to=3-2]
    \arrow[from=2-3, to=3-3]
    \arrow[from=2-4, to=4-3]
    \arrow[from=2-4, to=4-4]
    \arrow[from=2-4, to=4-5]
    \arrow[from=2-5, to=3-5]
    \arrow[from=2-6, to=3-6]
    \arrow[from=4-3, to=5-3]
    \arrow[from=4-4, to=5-4]
    \arrow[from=4-5, to=5-5]
\end{tikzcd}} \vspace{.25em}

We now discuss the rational point computations for each fiber product in the tree above.

\begin{itemize}
    \item $\texttt{9.27.0.1} \times X_{\rm{ns}}(2)$: We construct a model of $X = \texttt{9.27.0.1} \times X_{\rm{ns}}(2)$ as a projective plane curve using the method of Example \ref{Ex:X03XS45}. We omit the equation here as it is rather lengthy. Using Magma, we compute a birational map $\varphi \colon X \to H$ to the hyperelliptic curve $H$ defined by the Weierstrass equation
    \[
     y^2 = 6x^6 - 9x^5 - 18x^4 + 33x^3 + 9x^2 - 36x - 12.
    \]
    Using the command \texttt{IsLocallySolvable}, we find that $H(\Q_3) = \emptyset$, and hence $H(\Q) = \emptyset$. Therefore, $X$ has no rational places. We can check this a second way by computing the base locus of $\varphi$. Performing this computation, we find that $X(\mathbb{Q}) = \{ (0:1:0), (1:0:0) \}$. The points $(0:1:0)$ and $(1:0:0)$ are singular and each has only a single place above it, of degree $2$ and $3$, respectively.

    \item $\texttt{9.27.0.1} \times X_{0}(2)$: We construct a model of $X = \texttt{9.27.0.1} \times X_{0}(2)$ as a projective plane curve using the method of Example \ref{Ex:X03XS45}. In this case, $X$ has genus $4$ and is not hyperelliptic. Using Magma, we compute the canonical model $H$ of $X$. By calling the command  \texttt{IsLocallySolvable}  on $H$, we find that $H(\Q_3) = \emptyset$, and hence $H(\Q)=\emptyset$. Therefore, $X$ has no rational places.
        
    \item $\texttt{9.27.0.1} \times \texttt{4.2.0.1}$: This curve is handled in the same way as  $\texttt{9.27.0.1} \times X_{\rm{ns}}(2)$.
        
    \item $\texttt{9.27.0.1} \times X_{\rm{ns}}^+(4)$: This curve has genus $6$ and appears in \cite[p.\ 11]{DG}, although the authors did not determine its rational points. In this paper, rather than attempting to compute its rational points, we instead  refine our consideration by looking at fiber products with the modular curves corresponding to the three maximal closed subgroups of $\GL_2(\Z_5)$ with surjective determinant:
    \begin{itemize}
        \item  $\texttt{9.27.0.1} \times X_{\rm{ns}}^+(4) \times X_{S_4}(5)$: This curve covers the modular curve $X = \texttt{9.27.0.1} \times X_{S_4}(5)$. We use Zywina's \texttt{FindModelOfXG} to construct a nonsingular model of $X$. Running Zywina's \texttt{PointsViaLifting}, we find that $X(\Q_3) = \emptyset$, and hence $X(\Q) = \emptyset$.

        \item  $\texttt{9.27.0.1} \times X_{\rm{ns}}^+(4) \times X_{0}(5)$: This curve covers the modular curve $\texttt{9.27.0.1} \times X_{0}(5)$, which is handled in the same way as  $\texttt{9.27.0.1} \times X_{S_4}(5)$.
        \item  $\texttt{9.27.0.1} \times X_{\rm{ns}}^+(4) \times X_{\rm{ns}}^+(5)$: This curve covers the modular curve $X_{\rm{ns}}^+(4) \times X_{\rm{ns}}^+(5)$, all of whose rational points are CM by \cite[p.\ 2760]{MR2684496} and \cite{DG}.
    \end{itemize}
    \item $\texttt{9.27.0.1} \times \texttt{8.2.0.1}$: This curve is handled in the same way as  $\texttt{9.27.0.1} \times X_{\rm{ns}}(2)$.
    \item $\texttt{9.27.0.1} \times \texttt{8.2.0.2}$: This curve is handled in the same way as  $\texttt{9.27.0.1} \times X_{\rm{ns}}(2)$.
\end{itemize}

The third case is a bit more  intricate. Here, we assume that the $3$-adic image arises from a rational point on $X_{\rm{ns}}^+(3)$. We elect to look at fiber products with the modular curves associated with maximal
closed subgroups of $\GL_2(\Z_2)$ with surjective determinant. All of these fiber products have finitely many rational points, except $X_{{\rm ns}}^{+}(3) \times X_0(2)$ and $X_{{\rm ns}}^{+}(3) \times X_{\rm{ns}}^+(4)$. In the tree diagram below, we indicate that these two fiber products are isomorphic to $\mathbb{P}^1$. For these cases, we further refine our search by taking fiber products with the modular curves associated with maximal closed subgroups of $\GL_2(\Z_5)$ with surjective determinant.

\adjustbox{scale=0.65,center}{
\begin{tikzcd}[column sep=small]
                       &                                                &                               & X_{{\rm ns}}^{+}(3) \arrow[llld] \arrow[ld] \arrow[lld] \arrow[d] \arrow[rrd] \arrow[rrrd] &                    &                               &                                                   &                               &                               \\
X_{\rm{ns}}(2) \arrow[d]    & X_{0}(2) \arrow[d]                             & \texttt{4.2.0.1} \arrow[d]           & X_{\rm{ns}}^+(4) \arrow[d]                                                                        &                    & \texttt{8.2.0.1} \arrow[d]           & \texttt{8.2.0.2} \arrow[d]                               &                               &                               \\
\text{Genus $1$ rank $0$}       & \mathbb{P}^{1} \arrow[ld] \arrow[d] \arrow[rd] & \text{Genus $1$ rank $0$}              & \mathbb{P}^{1} \arrow[d] \arrow[rd] \arrow[rrd]                                            &                    & \text{Genus $1$ rank $0$}              & \shortstack{\text{Curious group;} \\ \text{already considered}} &                               &                               \\
X_{S_{4}}(5) \arrow[d] & X_{0}(5) \arrow[d]                             & X_{{\rm ns}}^{+}(5) \arrow[d] & X_{S_{4}}(5) \arrow[d]                                                                     & X_{0}(5) \arrow[d] & X_{{\rm ns}}^{+}(5) \arrow[d] &                                 &  &  \\
\shortstack{\text{Covers} \\ \text{Genus $1$ rank $0$}}  & \text{Genus $2$ rank $0$}                          & \shortstack{\text{Covers} \\ \text{Genus $1$ rank $0$}}         & \shortstack{\text{Curious group; maps} \\ \text{to Genus $1$ rank $0$}}                                                             & \shortstack{\text{Covers} \\ \text{Genus $1$ rank $0$}}   & \shortstack{\text{Covers a curve} \\ \text{in the literature}}      &  &       &  \\
\end{tikzcd}}

We now discuss the rational point computations for each fiber product in the tree above.
\begin{itemize}
    \item $X_{{\rm ns}}^{+}(3) \times X_{\rm{ns}}(2)$: This is an elliptic curve of rank $0$. A similar analysis to that for $X_0(3) \times X_{S_4}(5)$ reveals that every rational point on this modular curve is cuspidal or CM.
    \item $X_{{\rm ns}}^{+}(3) \times X_{0}(2)$: This curve has genus $0$, and it has infinitely many points. As mentioned above, we further refine our search by taking fiber products with the modular curves associated with maximal closed subgroups of $\GL_2(\Z_5)$ with surjective determinant.
    \begin{itemize}
        \item $X_{{\rm ns}}^{+}(3) \times X_{0}(2) \times X_{S_4}(5)$: This curve covers the modular curve $X_0(2) \times X_{S_4}(5)$, which is an elliptic curve of rank $0$. A similar analysis to that for $X_0(3) \times X_{S_4}(5)$ reveals that every rational point on this modular curve is cuspidal or CM.
        \item $X_{{\rm ns}}^{+}(3) \times X_{0}(2) \times X_0(5)$: This curve has genus $2$ and its Jacobian has rank $0$. We construct a projective model for it as $X_{{\rm ns}}^{+}(3) \times X_{0}(10)$ using the method of Example \ref{Ex:X03XS45}. From here, a similar analysis to that for $X_0(3) \times X_{\rm ns}^+(5)$  reveals that
every rational point on this modular curve is cuspidal.
        \item $X_{{\rm ns}}^{+}(3) \times X_{0}(2) \times X_{\rm{ns}}^+(5)$: This curve covers the modular curve $X_{0}(2) \times X_{\rm{ns}}^+(5)$, which is an elliptic curve of rank $0$. A similar analysis to that for $X_0(3) \times X_{S_4}(5)$ reveals that every rational point on this modular curve is CM.
    \end{itemize}
    \item $X_{{\rm ns}}^{+}(3) \times \texttt{4.2.0.1}$: This is an elliptic curve of rank $0$. A similar analysis to that for $X_0(3) \times X_{S_4}(5)$ reveals that every rational point on this modular curve is cuspidal or CM.
    \item $X_{{\rm ns}}^{+}(3) \times X_{\rm{ns}}^+(4)$: This curve has genus $0$, and it has infinitely many points. Again, we further refine our search by taking fiber products with the modular curves associated with maximal closed subgroups of $\GL_2(\Z_5)$ with surjective determinant.
    \begin{itemize}
        \item $X_{{\rm ns}}^{+}(3) \times X_{\rm{ns}}^+(4) \times X_{S_4}(5)$: This curve covers $X_{{\rm ns}}^{+}(3) \times X_{S_4}(5)$. In \cite{chiloyan2023classificationcuriousgaloisgroups}, the author showed that this curve has the ``curious'' property that  every rational point on it arises from a rational point on $X_{{\rm ns}}^{+}(3) \times X_{\rm{sp}}^+(5)$. Thus, every rational point on $X_{{\rm ns}}^{+}(3) \times X_{\rm{ns}}^+(4) \times X_{S_4}(5)$ arises from a rational point on $X_{{\rm ns}}^{+}(3) \times X_{\rm{ns}}^+(4) \times X_{\rm{sp}}^+(5)$, which covers $X = X_{\rm{ns}}^+(4) \times X_{\rm{sp}}^+(5)$. Using the method of Example \ref{Ex:X03XS45}, we compute a projective model for $X$. We find that $X$ is a plane quartic of genus $3$. We compute the canonical image of $X$ and apply the command \texttt{MinimizeReducePlaneQuartic} to obtain a simpler curve $H$, which is birational to $X$. Using the command \texttt{AutomorphismGroupOfPlaneQuartic}, we find a nontrivial automorphism $\varphi$ of $H$. We quotient $H$ by $\varphi$ using the command \texttt{AutomorphismGroup}. The quotient curve is an elliptic curve of rank $0$. By computing its rational points and pulling them back to $X$, we obtain all rational points of $X$. Two of these are singular: $(0:-5:1)$ and $(1:0:0)$. The former has no rational places above it and the latter has a unique rational place above it. In Table \ref{T:C3A}, we list the rational places of $X$ and their analysis.
    \begin{table}[h]
        \begin{tabular}{|llll|} \hline
Rational place & $j$-invariant & CM  & Nonsurjective primes \\ \hline
$(-\frac{561}{8} : -\frac{5}{4} : 1)$ & $-2^{-10} \cdot 3^3 \cdot 5^4 \cdot 11^3 \cdot 17^3$ & No  & $ 2, 5$ \\
$(-8 : -3 : 1)$        & $-2^{15}$                   & Yes & ---       \\
$(1 : 0 : 0)$          & $\infty$                  & --- & ---       \\
$(6 : -2 : 1)$         & $2^6 \cdot 3^3$                    & Yes & ---       \\
$(8 : -5 : 1)$         & $0$                       & Yes & ---       \\ 
$(24 : -1 : 1)$        & $-2^{15} \cdot 3^3$                 & Yes & ---       \\ \hline
\end{tabular}
    \caption{Analysis of rational places of $X_{{\rm ns}}^{+}(4) \times X_{\rm sp}^+(5)$} \label{T:C3A} \vspace{-1.5em}
    \end{table}      

    Observe that the only non-cuspidal, non-CM rational place of $X$ is $(-\frac{561}{8} : -\frac{5}{4} : 1)$, which corresponds to the $j$-invariant $-2^{-10} \cdot 3^3 \cdot 5^4 \cdot 11^3 \cdot 17^3$. Any elliptic curve with this $j$-invariant has that $3$ is its smallest surjective prime.
        
        \item $X_{{\rm ns}}^{+}(3) \times X_{\rm{ns}}^+(4) \times X_0(5)$: This curve covers the modular curve $X_{\rm{ns}}^+(4) \times X_0(5)$, which is an elliptic curve of rank $0$. A similar analysis to that for $X_0(3) \times X_{S_4}(5)$ provably determines the rational points of $X$. In Table \ref{T:C3B}, we list the rational places of $X$ and their analysis.

    \begin{table}[h]
        \begin{tabular}{|llll|} \hline
Rational place                  & $j$-invariant      & CM    & Nonsurjective primes \\ \hline
$(1 : 0 : 0)_1$      & $\infty$           & ---   & ---                 \\
$(1 : 0 : 0)_2$      & $\infty$           & ---   & ---                 \\
$(\frac{59}{8} : -\frac{25}{4} : 1)$   & $2^{-10} \cdot 5 \cdot 59^3$ & No    & $2, 5$          \\
$(\frac{41}{2} : -20 : 1)$     & $-2^{-2} \cdot 5^2 \cdot 41^3$   & No    & $2, 5$          \\ \hline
\end{tabular}
           \caption{Analysis of rational places of $X_{\rm{ns}}^+(4) \times X_0(5)$} \label{T:C3B} \vspace{-1.5em}
    \end{table}    

     Observe that the rational points $(\frac{59}{8} : -\frac{25}{4} : 1)$ and $(\frac{41}{2} : -20 : 1)$ are non-cuspidal and non-CM, but the smallest surjective prime associated with elliptic curves arising from these points is $3$.

        \item $X_{{\rm ns}}^{+}(3) \times X_{\rm{ns}}^+(4) \times X_{\rm{ns}}^+(5)$: This curve covers the modular curve $X_{\rm{ns}}^+(4) \times X_{\rm{ns}}^+(5)$, all of whose rational points are CM by \cite[p.\ 2760]{MR2684496} and \cite{DG}. 
    \end{itemize}
    \item $X_{{\rm ns}}^{+}(3) \times \texttt{8.2.0.1}$: This is an elliptic curve of rank $0$. A similar analysis to that for $X_0(3) \times X_{S_4}(5)$ reveals that every rational point on this modular curve is cuspidal or CM.
    \item $X_{{\rm ns}}^{+}(3) \times \texttt{8.2.0.2}$: In \cite{MR4588927}, the authors show that this curve has the ``curious'' property that every rational point on it arises from a rational point on $X_{{\rm ns}}^{+}(3) \times X_0(2)$, which we have already considered.
\end{itemize}

Let $\mathcal{J}$ denote the set of six $j$-invariants appearing in the statement of Theorem \ref{T:main}. The computations above prove that any non-CM elliptic curve $E/\mathbb{Q}$ whose $2$-, $3$-, and $5$-adic Galois representations are all nonsurjective must have $j$-invariant belonging to $\mathcal{J}$. We have seen that the smallest surjective prime associated with any elliptic curve whose $j$-invariant belongs to $\mathcal{J}$ is $7$. Consequently, the smallest surjective prime for any non-CM elliptic curve over $\mathbb{Q}$ is at most $7$. Moreover, any non-CM elliptic curve over $\mathbb{Q}$ whose $j$-invariant is not in $\mathcal{J}$ has smallest surjective prime at most $5$.

\bibliographystyle{amsplain}
\bibliography{References}

\end{document}